\theoremstyle{plain}
\newtheorem{theorem}{Theorem}
\numberwithin{equation}{section}
\newcommand{\ra}{\rightarrow}
\begin{document}

\title {Remarks on trigonometric functions after Eisenstein}

\date{}

\author[P.L. Robinson]{P.L. Robinson}

\address{Department of Mathematics \\ University of Florida \\ Gainesville FL 32611  USA }

\email[]{paulr@ufl.edu}

\subjclass{} \keywords{}

\begin{abstract}

We modify the Whittaker-Watson account of the Eisenstein approach to the trigonometric functions, basing these functions independently on the Eisenstein function $\varepsilon_2$. 

\end{abstract}

\maketitle

\medbreak

\section{Introduction} 

Eisenstein [E] initiated a novel approach to the theory of the trigonometric functions, based on the meromorphic functions defined by 
$$\varepsilon_k (z) = \sum_{n \in \mathbb{Z}} \frac{1}{(z + n)^k}$$
for $k$ a positive integer and $z \in \mathbb{C} \setminus \mathbb{Z}$. These functions were named in honour of Eisenstein by Weil, who elaborated details of the somewhat mystical calculations and further developed the theory in [W]. Of course, this novel approach to the trigonometric functions was but an offshoot or a shadow of the larger theory of elliptic functions. In their account of the Weierstrassian elliptic function theory, Whittaker and Watson [WW] include a very brief introduction to this trigonometric theory by way of illustration. 

\medbreak 

A little more explicitly, the approach of [E] as explicated in [W] develops the theory of trigonometric functions from the fundamental formula 
$$\varepsilon_1 (z) = \pi \cot \pi z.$$
This formula is intended as a {\it definition} of the cotangent function in terms of the positive constant $\pi$ {\it defined} by 
$$\pi^2 = 6 \sum_{n = 1}^{\infty}\frac{1}{n^2}.$$
For the identification of $\varepsilon_1 (z)$ with $\pi \cot \pi z$ as it is ordinarily understood, we refer to Remmert [R]; this reference also contains an outline of the Eisenstein approach and places it in historical context. 

\medbreak 

Our purpose here is to modify the approach adopted in [WW] so as to develop the trigonometric functions from the Eisenstein series $\varepsilon_2$. The approach in [WW] does not lend itself directly to a wholly independent construction of the trigonometric functions, as it incorporates $\pi$ with its ordinary meaning and makes use of the classical formulae 
$$\sum_{n = 1}^{\infty} \frac{1}{n^2} = \frac{\pi^2}{6} \; \; {\rm and} \; \; \sum_{n = 1}^{\infty} \frac{1}{n^4} = \frac{\pi^4}{90}.$$
When the approach in [WW] is reformulated so as not to assume $\pi$ with its ordinary meaning, the proof given there requires independent knowledge of the identity 
$$2 \: \big[ \sum_{n = 1}^{\infty} \frac{1}{n^2} \big]^2 = 5 \sum_{n = 1}^{\infty} \frac{1}{n^4}.$$
Our modification circumvents the need for this independent knowledge and indeed has this identity as a consequence. The approach in [WW] essentially identifies $\varepsilon_2 (z)$ as $\pi^2 {\rm cosec}^2 \pi z$ by virtue of its satisfying certain nonlinear differential equations of first and second order. Our modification goes beyond this: the reciprocal of $\varepsilon_2$ satisfies the second-order linear differential equation
$$g'' + \big(24 \sum_{n = 1}^{\infty}\frac{1}{n^2}\big) \: g = 2$$
from which the elementary trigonometric functions are immediately in evidence. Our approach has other benefits: for example, it eliminates the need for such tools as the Herglotz trick and the maximum modulus principle, which feature in some accounts of the theory. 

\medbreak 

\section{A modified approach} 

\medbreak 

Our starting point is the second Eisenstein series, which we rename $f$ for simplicity:  
$$f(z) =  \sum_{n \in \mathbb{Z}} \frac{1}{(z - n)^2}$$
for $z \in \mathbb{C} \setminus \mathbb{Z}$. The indicated series is normally convergent: let $K \subseteq \mathbb{C} \setminus \mathbb{Z}$ be compact and choose $R > 0$ so that $K$ lies in the disc $D_R (0)$; if $z \in K$ and $|n| > R$ then $|z - n| > |n| - R$ so that
$$\sum_{|n| > R}\frac{1}{|z - n|^2} \leqslant \sum_{|n| > R} \frac{1}{(|n| - R)^2}$$
and the uniformly majorizing series on the right converges by the limit comparison test. As a consequence, $f : \mathbb{C} \setminus \mathbb{Z} \ra \mathbb{C}$ is holomorphic; moreover, $f$ is plainly even and of period one. At each integer, $f$ has a double pole: around zero,  
$$f(z) = z^{-2} + \sum_{0 \neq n \in \mathbb{Z}} (z - n)^{-2}$$
where the second summand on the right is holomorphic in the open unit disc, there having Taylor expansion 
$$\sum_{0 \neq n \in \mathbb{Z}} (z - n)^{-2} = \sum_{d = 0}^{\infty} a_d z^{2 d}$$
with 
$$a_d = 2 (2 d + 1) \sum_{n = 1}^{\infty} n^{- (2 d + 2)}$$
as follows from the derived geometric series. 

\medbreak 

We now employ a familiar device, combining suitable derivatives and powers of $f$ so as to eliminate the poles. The Laurent expansion of $f(z)$ about the origin reads 
$$f(z) = z^{- 2} + a_0 + a_1 z^2 + \dots $$
so that 
$$f'(z) = - 2 z^{-3} + 2 a_1 z + \dots $$
and 
$$f''(z) = 6 z^{-4} + 2 a_1 + \dots $$
while 
$$f(z)^2 = z^{-4} + 2 a_0 z^{-2} + (a_0^2 + 2 a_1) + \dots .$$
\medbreak
\noindent
The combination $f'' - 6 f^2 + 12 a_0 f$ is of course holomorphic in $\mathbb{C} \setminus \mathbb{Z}$ and has period one; its singularities at the integers are removable, in view of the expansion 
$$f''(z) - 6 f(z)^2 + 12 a_0 f(z) = (6 a_0^2 - 10 a_1) + \dots $$
about the origin, where the ellipsis indicates a power series involving terms of degree two or greater. Removing these singularities, $f'' - 6 f^2 + 12 a_0 f$ becomes an entire function. 

\medbreak 

To proceed further, we examine the behaviour of $f$ in the vertical strip 
$$S = \{ z \in \mathbb{C} : |{\rm Re} \; z | \leqslant 1 \}.$$

\medbreak 

\begin{theorem} \label{zeroinfinity}
$f(z) \ra 0$ as $z \ra \infty$ in the strip $S$. 
\end{theorem}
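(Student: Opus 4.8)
The plan is to reduce the statement to a concrete estimate on the defining series. Writing $z = x + iy$, membership in $S$ means $|x| \le 1$, so $z \to \infty$ in $S$ forces $|y| \to \infty$. The triangle inequality gives $|f(z)| \le \sum_{n \in \mathbb{Z}} |z - n|^{-2} = \sum_{n \in \mathbb{Z}} \big( (x-n)^2 + y^2 \big)^{-1}$, and this majorant depends on $z$ only through $x$ and $y^2$; it therefore suffices to show that this last sum tends to $0$ as $y \to \infty$, uniformly for $|x| \le 1$.

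The difficulty is that the crude per-term bound $(x-n)^2 + y^2 \ge y^2$ yields $y^{-2}$ for each summand, which is far too weak to sum over all $n \in \mathbb{Z}$; on the other hand the bound $(x-n)^2 + y^2 \ge (x-n)^2$ ignores $y$ altogether. The resolution is to apply each bound only where it is efficient, splitting the sum at $|n| \approx y$. Concretely, I would set $N = \lfloor y \rfloor$ and separate the central block $|n| \le N$ from the tail $|n| > N$.

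For the central block I would use $(x-n)^2 + y^2 \ge y^2$, so that each of the at most $2N + 1 \le 2y + 1$ terms contributes at most $y^{-2}$; their total is bounded by $(2y+1)/y^2 = 2/y + 1/y^2$, which vanishes as $y \to \infty$. For the tail I would discard $y^2$ and use $|x - n| \ge |n| - 1$ (valid since $|x| \le 1$), giving $(x-n)^2 + y^2 \ge (|n|-1)^2$; hence the tail is at most $2 \sum_{m \ge N} m^{-2}$, the remainder of the convergent series $\sum m^{-2}$ whose convergence already underlies the normal-convergence argument for $f$. This remainder tends to $0$ as $N \to \infty$, that is, as $y \to \infty$.

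Combining the two estimates shows the majorant, and hence $|f(z)|$, tends to $0$, which is the assertion. The only genuinely substantive point is the choice of splitting threshold $\sim y$: this is precisely what balances the $O(1/y)$ contribution of the central terms against the convergent-tail contribution of the outer terms, and it is the step I expect to require the most care to phrase cleanly.
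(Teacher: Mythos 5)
Your proof is correct and is essentially the paper's argument: both split the sum into a central block of roughly $|n| \leqslant N$ terms, each bounded by $y^{-2}$, and a tail controlled by the remainder of the convergent series $\sum n^{-2}$. The only (immaterial) difference is that you cut at the moving threshold $N = \lfloor y \rfloor$, which yields an explicit $O(1/y)$ rate, whereas the paper first fixes $N$ in terms of $\varepsilon$ and then takes $|y| > \sqrt{N/\varepsilon}$.
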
 

\begin{proof} 
Let $z = x + i y \in S$ so that $|x| \leqslant 1$ and if $n \in \mathbb{Z}$ then $|z - n|^2 = (n - x)^2 + y^2$. If $|n| \leqslant 1$ then $|z - n|^2 \geqslant y^2$ while if $|n| > 1$ then $|z - n|^2 \geqslant (|n| - 1)^2 + y^2$. Accordingly, it follows that 
$$|f(z)| \leqslant \frac{3}{y^2} + 2 \sum_{n = 1}^{\infty} \frac{1}{n^2 + y^2}.$$
As $z \ra \infty$ in $S$ we need only inspect the second summand on the right. For any $N$ we have 
$$\sum_{n = 1}^{\infty} (n^2 + y^2)^{-1} = \sum_{1 \leqslant n  \leqslant N} (n^2 + y^2)^{-1} + \sum_{n > N} (n^2 + y^2)^{-1}.$$
Let $\varepsilon > 0$: choose $N$ so that $\sum_{n > N} n^{-2} < \varepsilon$; it follows that if $|y| > \sqrt{N/\varepsilon}$ then 
$$\sum_{n = 1}^{\infty} (n^2 + y^2)^{-1} \leqslant N y^{-2} + \sum_{n > N} n^{-2} < 2 \varepsilon.$$
\end{proof} 

\medbreak 

We now see that the entire function $f'' - 6 f^2 + 12 a_0 f$ is as trivial as can be. 
\medbreak 

\begin{theorem} \label{2DE}
The meromorphic function $f$ satisfies
$$f'' - 6 f^2 + 12 a_0 f = 0.$$
\end{theorem}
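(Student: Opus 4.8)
The plan is to show that the entire, period-one function $g := f'' - 6 f^2 + 12 a_0 f$ is identically zero by a Liouville argument, using the decay of $f$ in the strip $S$ just established. First I would record the two structural facts already in hand: $g$ is entire, this being precisely the removal of singularities carried out above, and $g$ has period one, inherited from $f$ since differentiation and multiplication preserve period one. It therefore suffices to prove that $g$ is bounded on $\mathbb{C}$ and tends to zero at infinity within $S$.

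Next I would show that $g(z) \ra 0$ as $z \ra \infty$ in $S$. By Theorem \ref{zeroinfinity} we already have $f(z) \ra 0$, so that $6 f(z)^2 \ra 0$ and $12 a_0 f(z) \ra 0$; the only remaining term is $f''$. Here I would reuse the estimates from the proof of Theorem \ref{zeroinfinity}. Writing $z = x + i y$ with $|x| \leqslant 1$, each $|z - n|^2 = (n - x)^2 + y^2 \geqslant y^2$, so $|z - n|^{-4} \leqslant y^{-2} |z - n|^{-2}$; summing and using $f''(z) = \sum_n 6 (z - n)^{-4}$ gives
$$|f''(z)| \leqslant 6 \, y^{-2} \sum_{n \in \mathbb{Z}} \frac{1}{|z - n|^2},$$
where the sum on the right is bounded (indeed tends to zero) by the very computation performed in the proof of Theorem \ref{zeroinfinity}. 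Hence $f''(z) \ra 0$, and therefore $g(z) \ra 0$ as $z \ra \infty$ in $S$.

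Finally I would invoke periodicity together with Liouville's theorem. Being continuous on the closed strip $S$ and tending to zero at infinity within $S$, the function $g$ is bounded on $S$; since $S$ contains a full fundamental domain $\{ z : 0 \leqslant {\rm Re} \; z \leqslant 1 \}$ and $g$ has period one, $g$ is bounded on all of $\mathbb{C}$. Liouville's theorem then forces $g$ to be constant, and the limit $g(z) \ra 0$ identifies this constant as $0$. I expect the only point requiring genuine care to be the decay of $f''$ in $S$; once this is secured via the bound $|z - n|^{-4} \leqslant y^{-2} |z - n|^{-2}$, the remainder is the standard Liouville packaging and presents no real obstacle.
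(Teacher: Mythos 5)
Your proof is correct and follows essentially the same route as the paper: show that $f''$ (like $f$) tends to zero as $z \ra \infty$ in the strip $S$, conclude that the entire period-one function $f'' - 6f^2 + 12 a_0 f$ is bounded on $\mathbb{C}$, apply Liouville's theorem, and identify the constant as zero from the decay at infinity. Your explicit bound $|z-n|^{-4} \leqslant y^{-2}|z-n|^{-2}$ is just a concrete implementation of the paper's remark that the argument of Theorem \ref{zeroinfinity} ``adapts easily'' to $f''$.
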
 

\begin{proof} 
The argument of Theorem \ref{zeroinfinity} adapts easily to show that the second derivative $f''(z) = 6 \sum_{n \in \mathbb{Z}} (z - n)^{-4}$ also tends to $0$ as $z \ra \infty$ in $S$. The entire function $f'' - 6 f^2 + 12 a_0 f$ is thus bounded in $S$ and so bounded on $\mathbb{C}$ by periodicity. According to the Liouville theorem, $f'' - 6 f^2 + 12 a_0 f$ is constant; the value of this constant is $0$ because $f'' - 6 f^2 + 12 a_0 f$ vanishes at infinity. 
\end{proof} 

\medbreak 

Thus the constant term $6 a_0^2 - 10 a_1$ in the expansion of $f'' - 6 f^2 + 12 a_0 f$ about the origin is zero. When we substitute the expressions for $a_0$ and $a_1$ and then simplify, we obtain the identity
$$2 \: \big[ \sum_{n = 1}^{\infty} \frac{1}{n^2} \big]^2 = 5 \sum_{n = 1}^{\infty} \frac{1}{n^4}.$$

\medbreak 

\begin{theorem} \label{1DE}
The meromorphic function $f$ satisfies
$$(f')^2 - 4 f^3 + 12 a_0 f^2 = 0.$$
\end{theorem}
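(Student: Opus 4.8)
The plan is to recognize the desired first-order relation as a \emph{first integral} of the second-order equation established in Theorem \ref{2DE}: after multiplication by $f'$, that second-order equation becomes an exact derivative, and integrating it produces precisely the quantity in question up to an additive constant.

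Concretely, I would set $h = (f')^2 - 4 f^3 + 12 a_0 f^2$, a meromorphic function on $\mathbb{C} \setminus \mathbb{Z}$, and differentiate directly, collecting the common factor $f'$:
$$h' = 2 f' f'' - 12 f^2 f' + 24 a_0 f f' = 2 f' \big( f'' - 6 f^2 + 12 a_0 f \big).$$
By Theorem \ref{2DE} the parenthesized factor vanishes identically, so $h' \equiv 0$ throughout $\mathbb{C} \setminus \mathbb{Z}$. Since $h$ is meromorphic on the connected open set $\mathbb{C} \setminus \mathbb{Z}$ with $h' \equiv 0$, it is locally constant off its poles; deleting the discrete set $\mathbb{Z}$ leaves a connected set, so $h$ is globally constant there, and in particular its apparent poles at the integers are removable.

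It remains to evaluate the constant, and I would do this by letting $z \ra \infty$ in the strip $S$. The estimate used in the proof of Theorem \ref{zeroinfinity} applies equally to $f'(z) = -2 \sum_{n \in \mathbb{Z}} (z - n)^{-3}$ and shows that $f'(z) \ra 0$, while Theorem \ref{zeroinfinity} itself gives $f(z) \ra 0$; hence $h(z) \ra 0$ and the constant value of $h$ must be $0$, as required.

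The computation is short, so the only step demanding care is this last determination of the constant. One might be tempted instead to read the constant off the Laurent expansion of $h$ at the origin, but its constant term there works out to $8 a_0^3 - 28 a_2$, whose vanishing is equivalent to the nonelementary identity $\sum_{n = 1}^{\infty} n^{-6} = \tfrac{8}{35} \big[ \sum_{n = 1}^{\infty} n^{-2} \big]^3$ and so is not available to us independently; in keeping with the spirit of this paper, the limit-at-infinity route is the one that avoids importing such knowledge. Conversely, comparing the two evaluations of the constant would \emph{yield} that identity as a further consequence.
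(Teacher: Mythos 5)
Your proof is correct and is essentially the paper's own argument: both recognize $(f')^2 - 4f^3 + 12 a_0 f^2$ as a first integral of the equation in Theorem \ref{2DE} (you differentiate the target quantity where the paper multiplies by $2f'$ and integrates, which is the same computation) and both evaluate the resulting constant as $0$ by letting $z \ra \infty$ in the strip, using that $f$ and $f'$ vanish there. Your added care about constancy on the connected set $\mathbb{C} \setminus \mathbb{Z}$ and your closing remark on the Laurent-coefficient route to $\sum_{n=1}^{\infty} n^{-6} = \tfrac{8}{35}\bigl[\sum_{n=1}^{\infty} n^{-2}\bigr]^3$ are accurate but do not change the substance.
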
 

\begin{proof} 
Multiply the equation of Theorem \ref{2DE} by $2 f'$ to obtain 
$$2 f' f'' - 12 f' f^2 + 24 a_0 f' f = 0$$
and then integrate to obtain 
$$(f')^2 - 4 f^3 + 12 a_0 f^2 = c$$
for some $c \in \mathbb{C}$. As $f$ and (similarly) $f'$ vanish at infinity, $c = 0$. 
\end{proof} 

\begin{theorem} \label{zero} 
The function $f$ is nowhere zero. 
\end{theorem}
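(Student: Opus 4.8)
The plan is to argue by contradiction, extracting the conclusion almost immediately from the first-order differential equation of Theorem \ref{1DE}. The first point to record is that any zero of $f$ must lie in $\mathbb{C} \setminus \mathbb{Z}$: at each integer $f$ has a double pole rather than a zero, so a hypothetical zero $z_0$ sits in the region where $f$ is holomorphic and therefore possesses a convergent Taylor expansion about $z_0$. This legitimizes speaking of the order to which $f$ vanishes at $z_0$.

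Next I would rewrite the equation of Theorem \ref{1DE} in the factored form
$$(f')^2 = 4 f^3 - 12 a_0 f^2 = 4 f^2 (f - 3 a_0).$$
Suppose $f(z_0) = 0$ and let $m \geqslant 1$ be the order to which $f$ vanishes at $z_0$. Substituting $z_0$ already shows $(f'(z_0))^2 = 0$, so that the zero is in fact at least double; more usefully, $f'$ vanishes to order $m - 1$ at $z_0$, whence the left-hand side vanishes there to order exactly $2m - 2$. On the right-hand side the factor $f^2$ vanishes to order $2m$, while $f - 3 a_0 \ra - 3 a_0 \neq 0$ as $z \ra z_0$, since $a_0 = 2 \sum_{n = 1}^{\infty} n^{-2}$ is strictly positive; hence the right-hand side vanishes to order exactly $2m$. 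As $2m - 2 \neq 2m$, the two sides cannot agree near $z_0$, and this contradiction establishes that $f$ has no zero.

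I do not anticipate a genuine obstacle: the heart of the matter is the one-line mismatch of vanishing orders, and the argument leans only on the already-established first-order equation together with the positivity of $a_0$, which is immediate from its defining series. The two points requiring a moment's care are purely bookkeeping — confirming that a zero cannot occur at an integer (so that the Taylor expansion, and hence the notion of order, is available) and confirming that $a_0 \neq 0$ (so that the factor $f - 3 a_0$ is genuinely non-vanishing at $z_0$ and does not raise the order on the right). An equally clean alternative, should one prefer to avoid order counting, is to differentiate the second-order equation of Theorem \ref{2DE} repeatedly and show by induction that every derivative of $f$ vanishes at $z_0$, forcing $f \equiv 0$ by the identity theorem and contradicting the presence of poles; but the order comparison above is the more economical route.
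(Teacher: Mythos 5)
Your proof is correct, and it is genuinely different from the paper's. The paper also argues by contradiction at a hypothetical zero $a$, but via a global--to--local induction: writing $2p(w) = 4w^3 - 12a_0w^2$, it uses \emph{both} Theorem \ref{2DE} ($f'' = p'\circ f$) and Theorem \ref{1DE} ($(f')^2 = 2p\circ f$) to show inductively that every even-order derivative of $f$, and the square of every odd-order derivative, is a polynomial in $f$ with vanishing constant term; hence all derivatives of $f$ vanish at $a$, and the Identity Theorem forces $f \equiv 0$, contradicting the poles. Your argument instead rests on a single local order count in the factored first-order equation $(f')^2 = 4f^2(f - 3a_0)$: a zero of order $m$ makes the left side vanish to order exactly $2m-2$ and the right side to order exactly $2m$ (using $a_0 = 2\sum_{n\geqslant 1} n^{-2} > 0$), which is impossible. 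This is more economical --- it needs only Theorem \ref{1DE}, no induction, and the contradiction is immediate --- and it is robust (even if one did not know $a_0 \neq 0$, the right side would vanish to order $3m$, and $3m = 2m-2$ is equally impossible). One small caveat: your appeal to ``the order to which $f$ vanishes at $z_0$'' presupposes that this order is finite, which holds because $f$ is not identically zero on the connected set $\mathbb{C}\setminus\mathbb{Z}$ (it has poles at the integers); so the Identity Theorem is not entirely banished from your route, merely relegated to this standard background fact, whereas the paper invokes it as the explicit final step. Your closing alternative --- differentiating Theorem \ref{2DE} repeatedly to show all derivatives vanish at $z_0$ --- is essentially the paper's actual proof.
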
 

\begin{proof} 
Theorem \ref{2DE} and Theorem 3 tell us that  if we write $2 p(w) = 4 w^3 - 12 a_0 w^2$ then $f'' = p' \circ f$ and $(f')^2 = 2 p \; \circ f$. An elementary induction shows that each even-order derivative of $f$ is a polynomial in $f$ with vanishing constant term: for the inductive step, if $f^{(2 d)} = q \circ f$ then $f^{(2 d + 1)} = (q' \circ f) f'$ and $f^{(2 d + 2)} = (2 q'' p + q' p') \circ f$; the square of each odd-order derivative of $f$ is then also a polynomial in $f$ with vanishing constant term. Finally, if $f$ were to vanish at $a \in \mathbb{C} \setminus \mathbb{Z}$ then all its derivatives would vanish at $a$; the Identity Theorem would then force $f$ itself to vanish, which is absurd. 
\end{proof} 

\medbreak 

We may now introduce the reciprocal function $g = 1/f$: as $f$ is a nowhere-zero meromorphic function with a double pole at each integer, $g$ is an entire function with a double zero at each integer; as $f$ is even and of period one, $g$ is even and of period one. 

\medbreak 

\begin{theorem} \label{trig}
The entire function $g$ satisfies 
$$g'' + 12 a_0 g = 2.$$
\end{theorem}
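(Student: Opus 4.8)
The plan is to obtain the asserted identity by differentiating $g = 1/f$ twice and then eliminating every derivative of $f$ by means of the two nonlinear differential equations already in hand. The entire computation is carried out on $\mathbb{C} \setminus \mathbb{Z}$, where $f$ is holomorphic and, by Theorem \ref{zero}, nowhere zero; there the quotient $g$ and its derivatives are legitimately expressed in terms of $f$, $f'$ and $f''$.

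First I would record the derivatives of the reciprocal. From $g = 1/f$ one has $g' = -f'/f^2$ and hence
$$g'' = -\frac{f''}{f^2} + \frac{2 (f')^2}{f^3}.$$
The decisive step is to feed in the relations of the previous theorems. Theorem \ref{2DE} gives $f'' = 6 f^2 - 12 a_0 f$, so that $-f''/f^2 = -6 + 12 a_0/f = -6 + 12 a_0 g$; Theorem \ref{1DE} gives $(f')^2 = 4 f^3 - 12 a_0 f^2$, so that $2 (f')^2/f^3 = 8 - 24 a_0/f = 8 - 24 a_0 g$. Adding the two contributions, the terms in $a_0 g$ combine to $-12 a_0 g$ and the constants to $2$, leaving $g'' = 2 - 12 a_0 g$, which is precisely $g'' + 12 a_0 g = 2$ on $\mathbb{C} \setminus \mathbb{Z}$.

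It remains to promote this from $\mathbb{C} \setminus \mathbb{Z}$ to the whole plane. Since $g$ is entire, so is $g'' + 12 a_0 g$; this entire function agrees with the constant $2$ on the dense open set $\mathbb{C} \setminus \mathbb{Z}$, so by continuity (or the Identity Theorem) the identity persists at the integers as well.

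I do not anticipate a genuine obstacle: the computation is purely algebraic once the two differential equations are inserted. The only point demanding care is that the reciprocal manipulations are valid only away from the integers, which is exactly why Theorem \ref{zero} is needed to license the quotient and why the concluding continuity argument is required to recover the identity at the poles of $f$, where $g$ has its double zeros.
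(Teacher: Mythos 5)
Your proof is correct and is essentially the paper's own argument: differentiate $g = 1/f$ twice and substitute $f'' = 6f^2 - 12 a_0 f$ and $(f')^2 = 4f^3 - 12 a_0 f^2$ from Theorems \ref{2DE} and \ref{1DE} to obtain $g'' = 2 - 12 a_0 g$. Your explicit final step extending the identity from $\mathbb{C} \setminus \mathbb{Z}$ to all of $\mathbb{C}$ by continuity of the entire function $g'' + 12 a_0 g$ is a welcome touch of care that the paper leaves implicit.
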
 

\begin{proof} 
Simply differentiate and then substitute from Theorem \ref{2DE} and Theorem \ref{1DE}: $g' = - f^{-2} f'$ so that $g'' = 2 f^{-3} (f')^2 - f^{-2} f'' =  2 f^{-3} (4 f^3 - 12 a_0 f^2) - f^{-2} (6 f^2 - 12 a_0 f) = 2 - 12 a_0 g\;$  as required. 
\end{proof} 

\medbreak 

Recall that $g$ has a double zero at the origin; accordingly, the second-order differential equation displayed in Theorem \ref{trig} is supplemented by the initial data $g(0) = 0$ and $g'(0) = 0$. 

\medbreak 

At this point, it is quite clear that our approach has made contact with the elementary trigonometric functions. {\it Define} the positive number $\pi$ by 
$$\pi^2 := 3 a_0 = 6 \sum_{n = 1}^{\infty} n^{-2}.$$ 
{\it Define} the function $c: \mathbb{C} \ra \mathbb{C}$ by the rule that if $z \in \mathbb{C}$ then 
$$c(z) : = 1 - 2 \pi^2 g(z/2 \pi).$$
The entire function $c$ has period $2 \pi$; this inbuilt periodicity is a special feature of the Eisenstein approach. Further, a direct calculation reveals that it satisfies the initial value problem 
$$c'' + c = 0; \; \; c(0) = 1, \; c'(0) = 0.$$
As an entire function, its Taylor series about the origin is consequently 
$$c(z) = \sum_{n = 0}^{\infty} (-)^n \frac{z^{2 n}}{(2 n)!}.$$
Thus $c$ is precisely the cosine function, from which flows the whole theory of trigonometric functions. Incidentally, a duplication formula for the cosine function shows that $f(z) = \pi^2 {\rm cosec}^2 \pi z$. 

\medbreak 

We close by remarking on ways in which our approach varies from the approach in [WW]. First of all, [WW] incorporates $\pi$ in the theory from the very start; its removal from the function there analyzed yields $f$. Our Theorem \ref{zeroinfinity} improves the [WW] observation that $f(z)$ is {\it bounded} as $z \ra \infty$ in the strip $\{ z \in \mathbb{C} : | {\rm Re} \; z | \leqslant 1/2 \}$; the weaker result means that [WW] must assume the identity $2[\sum_{n = 1}^{\infty} n^{-2} ]^2 = 5 \sum_{n = 1}^{\infty} n^{-4}$ in order to conclude that the bounded function $f'' - 6 f^2 + 12 a_0 f$ is identically zero. Our Theorem \ref{zero} to the effect that $f$ never vanishes permits us to pass directly to its reciprocal $g$ and thence to the elementary second-order linear differential equation in Theorem \ref{trig}; by contrast, [WW] essentially stops short at the nonlinear differential equations that we display in Theorem \ref{2DE} and Theorem \ref{1DE}. 

\medbreak

\bigbreak

\begin{center} 
{\small R}{\footnotesize EFERENCES}
\end{center} 
\medbreak 

[E] F. G. M. Eisenstein, {\it Genaue Untersuchung der unendlichen Doppelproducte ... }, Jour. f\"ur Reine und Angew. Math. {\bf 35} (1847) 153-274. 

\medbreak 

[R] R. Remmert, {\it Theory of Complex Functions}, Graduate Texts in Mathematics {\bf 122}, Springer-Verlag (1991). 

\medbreak 

[W] A. Weil, {\it Elliptic Functions according to Eisenstein and Kronecker}, Ergebnisse der Mathematik {\bf 88}, Springer-Verlag (1976). 

\medbreak 

[WW] E. T. Whittaker and G. N. Watson, {\it A Course of Modern Analysis}, Second Edition, Cambridge University Press (1915).

\medbreak

\end{document}